\newtheorem{theorem}{Theorem}[section]
\newtheorem{lemma}{Lemma}[section]
\newtheorem{claim}{Claim}[section]
\newtheorem{definition}{Definition}[section]
\newcommand{\ex}{\mathrm{ex}}
\title{The planar Tur\'an number of double star $S_{2,4}$}
\author{Xin Xu\thanks{School of Science, North China University of Technology, Beijing, China.}
\and Jiawei Shao\footnotemark[1]}
\date{}
\begin{document}

\maketitle

\begin{abstract}
Planar Tur\'an number $\ex_{\mathcal{P}}(n,H)$ of $H$ is the maximum number of edges in an $n$-vertex planar graph which does not contain $H$ as a subgraph. Ghosh, Gy\H{o}ri, Paulos and Xiao initiated the topic of the planar Tur\'an number for double stars. In this paper, we prove that $\ex_{\mathcal{P}}(n,S_{2,4})\leq \frac{31}{14}n$ for $n\geq 1$, and show that equality holds for infinitely many integers $n$.\\
\textbf{Keywords:}  Planar Tur\'an number, Double stars, Extremal planar graphs
\end{abstract}

\maketitle
\section{Introduction}

All graphs considered in this paper are finite and simple. Let $G=(V(G),E(G))$, where $V(G)$ and $E(G)$ are the vertex set and edge set. Let $v(G)$, $e(G)$, $\delta(G)$ and $\Delta(G)$ denote number of vertices, number of edges, minimum degree and maximum degree of $G$, respectively. We use $N_{G}(v)$ to denote the set of vertices of $G$ adjacent to $v$. Let  $N_{G}[v]=N_{G}(v)\cup \{v\}$. For any subset $S\subset V(G)$, the subgraph induced on $S$ is denoted by $G[S]$. We denote by $G\backslash S$ the subgraph induced on $V(G)\backslash S$. If $S=\{v\}$, we simply write $G\backslash v$. We use $e[S,T]$ to denote the number of edges between $S$ and $T$, where $S$, $T$ are subsets of $V(G)$.

Let $H$ be a graph, and a graph is called $H$-free if it does not contain $H$ as a subgraph. The classical problem in extremal graph theory is to determine the  $\ex(n,H)$, which gives the maximum number of edges in an $H$-free graph on $n$ vertices.  In 1941, Tur\'an~\cite{turan} gave the exact value of $\ex(n,K_{r})$, where $K_{r}$ is a complete graph with $r$ vertices. 
Later in 1946, the Erd\H{o}s-Stone Theorem~\cite{erdos1946} extended this to the case for all non-bipartite graphs $H$ and showed that
$\ex(n,H)=(1-\frac{1}{\chi(H) -1})\binom{n}{2}+o(n^{2})$, where $\chi(H)$ denotes the chromatic number of $H$. This latter result has been called
the ``fundamental theorem of extremal graph theory''~\cite{bollobs2002}.

Dowden~\cite{dowden2016} in 2016 initiated the study of planar Tur\'an-type problems. The {\it planar  Tur\'an number} of $H$, denoted by $\ex_{\mathcal{P}}(n,H)$, is the maximum number of edges in an $H$-free planar graph on $n$ vertices. Dowden studied the planar Tur\'an number of $C_{4}$ and $C_{5}$, where $C_{k}$ is a cycle with $k$ vertices. Ghosh, Gy\H{o}ri, Martin, Paulos and Xiao~\cite{ghosh2022c6} gave the exact value for $C_{6}$. Shi, Walsh and Yu~\cite{shi2023c7}, Gy\H{o}ri, Li and Zhou\cite{győri2023c7} gave the exact value for $C_{7}$. The planar Tur\'an number of $C_{k}$ is still unknown for $k\geq 8$. Cranston, Lidick\'{y}, Liu and Shantanam~\cite{daniel2022counterexample} first gave both lower and upper bound for general cycles, Lan and Song~\cite{lan2022improved} improved the lower bound. Recently, Shi, Walsh and Yu\cite{shi2023dense} improved the upper bound, Gy\H{o}ri, Varga and Zhu~\cite{győri2023new} gave a new construction and improved the lower bound. Lan, Shi and Song~\cite{lan2019hfree} gave a sufficient condition for graphs with planar Tur\'an number of $3n-6$. We refer the interested readers to more results on paths, theta graphs and other graphs~\cite{lan2019shortpaths,lan2019thetafree,ghosh2023theta6,győri2022extremal,zhai2022,fang2023,fang2022intersecting,li2024,lan2024planar,du2021,győri2023k4c5k4c6}.

Recently, Gy\H{o}ri, Martin, Paulos and Xiao\cite{ghosh2022planar} studied the topic for double stars as the forbidden graph. A $(k,l)$-star, denoted by $S_{k,l}$, is the graph obtained from an edge $uv$, and joining end vertices with $k$ and $l$ vertices respectively. They gave the exact value for $S_{2,2}$ and $S_{2,3}$. Here, we obtain the exact value for $S_{2,4}$ by a new method.

\begin{theorem}\label{thm}
    Let $G$ be an $n$-vertex $S_{2,4}$-free planar graph. Then $e(G)\leq \frac{31}{14}n$ with equality holds when $n\equiv 0\ (\text{mod } 14)$.
\end{theorem}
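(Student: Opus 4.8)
The plan is to prove the upper bound by a discharging argument on a minimal counterexample and to obtain tightness from an explicit $14$-vertex extremal graph. Suppose the inequality fails and let $G$ be an $S_{2,4}$-free planar graph with $14\,e(G)>31\,v(G)$ having the fewest vertices. First I would reduce to $G$ connected with $\delta(G)\ge 3$: some component of $G$ must itself violate the bound, and if $\deg(v)\le 2$ then $G\setminus v$ violates it too, since $14(e(G)-\deg v)>31v(G)-14\deg v\ge 31v(G)-28=31(v(G)-1)+3$, contradicting minimality. Next I would establish $\Delta(G)\le 5$. If $\deg(v)\ge 7$: every neighbour $u$ of $v$ has $\deg(u)\ge 3$, so one can greedily pick two neighbours of $u$ other than $v$ and then four neighbours of $v$ other than $u$ avoiding them, all distinct -- an $S_{2,4}$. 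If $\deg(v)=6$: the same greedy choice shows every neighbour $u$ of $v$ must satisfy $N(u)\subseteq N[v]$, so $N[v]$ is a union of components, whence $G=G[N[v]]$ has $7$ vertices and at most $15<\tfrac{31}{14}\cdot 7$ edges. Both cases contradict the choice of $G$. After these reductions $3\le\deg(x)\le 5$ for all $x$, and using $2e(G)=3n_3+4n_4+5n_5$ (with $n_i$ the number of degree-$i$ vertices) the hypothesis becomes $4n_5>10n_3+3n_4$, which I must contradict.

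The core is a list of local structural facts for $S_{2,4}$-free planar graphs with $3\le\deg\le 5$, all flowing from one observation: if $\deg(v)=5$ and $u\in N(v)$ then $u$ has \emph{at most one} neighbour outside $N[v]$ (else $v$ is the degree-$5$ centre of an $S_{2,4}$). Immediate consequences: the link $L_v=G[N(v)]$ has $\deg_{L_v}(u)\ge\deg_G(u)-2\ge 1$, so $L_v$ has no isolated vertex and hence at least three edges; and two adjacent degree-$5$ vertices have at least three common neighbours. Combining this with planarity I would then prove that a degree-$5$ vertex has at most three degree-$5$ neighbours (so at least two neighbours of degree $\le 4$), and a degree-$4$ vertex has at most three degree-$5$ neighbours -- in each forbidden case the forced common neighbours together with the forced edges inside a small neighbourhood contain a $K_5$ or a $K_{3,3}$. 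Sharper versions (pinning down the possible shapes of $L_v$, which turns out to be essentially a subgraph of a triangulated pentagon, and how overlapping links interact) carry most of the case analysis.

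Now I would discharge. Give vertex $x$ the charge $\omega(x)=7\deg(x)-31$, so $\sum_x\omega(x)=14e(G)-31v(G)>0$; degree-$5$ vertices start at $+4$, degree-$4$ at $-3$, degree-$3$ at $-10$. Each degree-$5$ vertex sends its surplus to its neighbours of degree $\le 4$, sending \emph{more to neighbours adjacent to fewer degree-$5$ vertices} (and, if useful, routing part of the surplus through incident triangular faces, which a degree-$5$ vertex always has by the link fact). On the extremal graph below the natural choice makes a degree-$5$ vertex send $3/t$ to a degree-$\le 4$ neighbour having $t$ neighbours of degree $5$, and this exactly balances; in general the rule will need refinement (possibly a second round among ``clusters'' of mutually adjacent degree-$5$ vertices). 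The aim is that every vertex ends with charge $\le 0$, contradicting $\sum_x\omega(x)>0$. Degree-$3$ vertices have large slack and degree-$5$ vertices empty to $0$, so \textbf{the main obstacle is to show that no degree-$4$ vertex ends positive}, i.e. receives more than $3$. This is exactly where the bounds above matter: a degree-$4$ vertex with three degree-$5$ neighbours forces, via the ``three common neighbours'' fact, several further low-degree vertices around it that soak up most of the incoming charge, while every configuration in which that vertex would be overloaded is ruled out as non-planar. Getting the rule and the corresponding finite list of configurations exactly right is the delicate point; I expect a handful of sub-cases organised by the structure of $G$ on the closed neighbourhood of the candidate overloaded vertex.

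For tightness when $n\equiv 0\pmod{14}$ it suffices to build one $S_{2,4}$-free planar graph $G_0$ on $14$ vertices with $31$ edges and take $n/14$ disjoint copies. Take two disjoint octahedra; in each, insert a new vertex in one of its (triangular) faces and join it to the three vertices of that face, then join the two new vertices by an edge. Then $v(G_0)=12+2=14$ and $e(G_0)=2\cdot 12+3+3+1=31$, and $G_0$ is planar. Its degree-$5$ vertices are precisely the six octahedron vertices on the two chosen faces; for such a vertex $w$ one checks that each of its five neighbours has exactly one neighbour outside $N[w]$, so $w$ cannot be the degree-$5$ centre of an $S_{2,4}$, and no other vertex has degree $\ge 5$ -- hence $G_0$ is $S_{2,4}$-free. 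Taking $n/14$ disjoint copies yields $e=31\cdot(n/14)=\tfrac{31}{14}n$, matching the upper bound, which completes the plan.
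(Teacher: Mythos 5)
Your upper-bound argument has a genuine gap at its core: the discharging is never actually carried out. The reductions (connectivity, $\delta\ge 3$, $\Delta\le 6$, the degree-$6$ closed neighbourhood being a $7$-vertex component, the fact that a neighbour of a degree-$5$ vertex has at most one neighbour outside its closed neighbourhood, and the ``at least three common neighbours'' consequence) are correct and mirror the paper's preliminary claims. But from there you only assert the structural lemmas (``a degree-$5$ vertex has at most three degree-$5$ neighbours'', ``a degree-$4$ vertex has at most three degree-$5$ neighbours'') without proof, and the discharging rule you state does not work as given: under ``send $3/t$ to each degree-$\le 4$ neighbour with $t$ degree-$5$ neighbours'', every low-degree vertex indeed ends $\le 0$, but a degree-$5$ vertex ends at $4-\sum_u 3/t_u$, which is positive whenever it has few low-degree neighbours each shared among several degree-$5$ vertices (e.g.\ configurations built from overlapping $5$-$3$-$5$ and $5$-$4$-$5$ paths). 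You acknowledge this yourself (``the rule will need refinement \dots a handful of sub-cases''), but that refinement \emph{is} the theorem: it is exactly the case analysis the paper performs, there organized not as discharging but as a weight decomposition --- partitioning $V(G)$ into pieces built around $5$-$5$ edges, maximal expansions of $5$-$4$-$5$ and $5$-$3$-$5$ paths, $5$-$4^{-}$ stars, degree-$6$ neighbourhoods and a low-degree remainder, and showing each piece $H$ satisfies $w(H)=e(H)+\tfrac12 e[H,G\setminus H]\le \tfrac{31}{14}v(H)$. Without your analogue of that analysis the upper bound is a plan, not a proof.

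Your sharpness construction, by contrast, is correct and complete: each octahedron-plus-apex half gives $7$ vertices and $15$ edges, the joining edge gives $31$ edges on $14$ vertices, planarity is clear, and only the six degree-$5$ vertices could be the $4$-leaf centre of an $S_{2,4}$, each of whose neighbours has exactly one neighbour outside the closed neighbourhood, so no $S_{2,4}$ occurs; disjoint copies then give $e=\tfrac{31}{14}n$ for all $n\equiv 0\pmod{14}$. This is a genuinely different (and arguably simpler) extremal example than the paper's two $7$-vertex building blocks $H^{\#},H^{*}$ glued in a tree-like fashion --- your $7$-vertex half even has a different degree sequence ($5,5,5,4,4,4,3$) from the paper's $H^{*}$ --- so it is worth keeping; but it only settles the equality part of the statement, not the inequality.
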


Based on our proof technique, we also find a new extremal construction showing sharpness of Theorem~\ref{thm}.

\section{Definitions and Preliminaries}

We give some necessary definitions and preliminary results which are needed in the proof. 
Let $G$ be a planar graph.

\begin{definition}
A $\boldsymbol{k\text{-}l}$ \textbf{edge} is an edge whose end vertices are of degree $k$ and $l$. 
A $\boldsymbol{k\text{-}l\text{-}s}$ \textbf{path} is an induced path consisting of three vertices with degree $k$, $l$ and $s$.
\end{definition}

\begin{definition}
A $\boldsymbol{k\text{-}s^{-}}$ \textbf{star} is a subgraph in $G$ with $k+1$ vertices, where there is a central vertex connecting to the other $k$ vertices, and all other $k$ vertices have degree of at most $s$. 
\end{definition}

\begin{definition}
Let $H$  be a subgraph of $G$. The  weight of $H$, denoted by $\boldsymbol{w(H)}$, is defined as
\begin{center}
    $e(H)+\frac{1}{2}(e[H,G\backslash H])$.
\end{center}
\end{definition}

Obviously, $w(H)=\frac{1}{2}\sum\limits_{v\in V(H)}d(v)$, where $d(v)$ is the number of edges incident with $v$ in $G$.

We shall make use of the following lemma in the proof of Theorem~\ref{thm}.

\begin{lemma}\label{lem}
    Let $G$ be an $n$-vertex $S_{2,4}$-free planar graph with $\delta(G)\geq 3$. Then 
    $$e(G)=w(G)\leq \frac{31}{14}n.$$
\end{lemma}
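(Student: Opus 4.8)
The plan is to use the discharging method on a plane embedding of $G$. Since $G$ is planar with $\delta(G)\geq 3$, Euler's formula gives $\sum_{v\in V(G)}(d(v)-6) + \sum_{f}(2\ell(f)-6) = -12 < 0$ after the standard manipulation; equivalently, to prove $e(G)\leq \frac{31}{14}n$ it suffices to show that the average degree is at most $\frac{31}{7}$, i.e. $\sum_{v}(7d(v)-31)\leq 0$. So I would assign to each vertex $v$ the initial charge $\mu(v) = 7d(v)-31$ and aim to redistribute charge (possibly also borrowing from faces, using the planarity slack) so that every vertex ends with nonpositive charge. Vertices of degree $3$ start with charge $-10$, degree $4$ with $-3$ — these are ``sinks'' — while vertices of degree $\geq 5$ start positive ($+4$ at degree $5$, $+11$ at degree $6$, etc.) and must discharge their surplus.

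The heart of the matter is the structural restriction imposed by $S_{2,4}$-freeness. The key observations I would establish first: if $uv$ is an edge with $d(u)\geq 3$ and $d(v)\geq 5$, then because $v$ has at least four neighbors besides $u$, the vertex $u$ can have at most one neighbor other than $v$ — otherwise we would find $S_{2,4}$ centered on the edge $uv$. Hence any vertex of degree $\geq 5$ has all its neighbors of degree $\leq 4$ \emph{except} possibly... more precisely, a degree-$\geq 3$ vertex adjacent to a degree-$\geq 5$ vertex has degree exactly $3$ with a very constrained neighborhood, or the two high-degree vertices force a contradiction. Pushing this further: a vertex $v$ of degree $d\geq 5$ can be adjacent to at most one vertex of degree $\geq 3$... no — I need to be careful. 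The cleaner statement is: if $d(v)\geq 5$ and $u\in N(v)$ with $d(u)\geq 3$, then $u$ has exactly one neighbor outside $\{v\}$, and that neighbor has degree... this chains. I would catalog, via these $S_{2,4}$-free deductions together with the definitions of $k$-$l$ edges, $k$-$l$-$s$ paths, and $k$-$s^-$ stars given in the preliminaries, exactly which local configurations around a high-degree vertex are possible. This is where the ``new method'' of the paper lives, and it is the main obstacle: getting a tight enough local structure theorem that the subsequent discharging closes with the exact constant $\frac{31}{14}$ rather than something weaker.

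With the structure in hand, I would set up discharging rules of the form: each vertex of degree $\geq 5$ sends a fixed amount of charge to each neighbor of degree $3$ or $4$ (the exact fractions — likely with denominator $14$ or $7$, e.g. $\tfrac{10}{7}$ along a suitable edge — chosen to match the extremal ratio), with corrections along $3$-$3$ edges or within low-degree components. Since $G$ need not be a triangulation, I would also allow vertices to draw on the positive contribution of large faces where needed, or alternatively reduce to the case where adding edges keeps $G$ planar and $S_{2,4}$-free (a maximality assumption), so I may assume $G$ is edge-maximal; but here one must check that edge-maximality is compatible with $\delta\geq 3$ and does not create $S_{2,4}$, so I would likely avoid that and argue directly. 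Finally, I would verify that after discharging every vertex has charge $\leq 0$: for degree-$3$ and degree-$4$ vertices this amounts to checking they receive enough (this uses the structure theorem to guarantee enough high-degree neighbors, or that isolated-in-the-low-part vertices lie on big faces); for degree-$\geq 5$ vertices it amounts to checking $7d(v)-31$ minus the total sent out is $\leq 0$, which by planarity bounds on $|N(v)|$ and the per-neighbor rate is a finite case check on $d(v)\in\{5,6,7,\dots\}$ up to the point where the rate dominates. Summing all final charges gives $\sum_v(7d(v)-31)\leq 0$, hence $14 e(G) = 7\sum_v d(v)\leq 31n$, which is the claim. The delicate points I anticipate are (i) the correct treatment of non-triangular faces — the inequality $2\ell(f)-6\geq 0$ is the only slack and it must be shepherded to exactly the right vertices — and (ii) making the per-edge discharging rate simultaneously large enough to rescue all degree-$3$ vertices and small enough that degree-$5$ vertices survive, which is precisely what pins the constant at $\frac{31}{14}$.
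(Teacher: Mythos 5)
What you have written is a plan for a discharging argument, not a proof: the discharging rules are never specified, the local structure theorem they are supposed to rest on is never established, and the final verification is explicitly deferred. The part you yourself flag as ``the main obstacle'' is in fact the entire content of the lemma. Worse, the one structural deduction you do state is false as stated. If $uv\in E(G)$ with $d(u)\geq 3$ and $d(v)\geq 5$, it does not follow that $u$ has at most one neighbor besides $v$: a copy of $S_{2,4}$ on the edge $uv$ requires the two pendant neighbors of $u$ and the four pendant neighbors of $v$ to be six \emph{distinct} vertices, so $u$ may have two further neighbors provided they are common neighbors of $u$ and $v$. This escape is not a corner case; it is exactly what happens in the extremal graphs, and the correct conclusion (the one the paper uses) is that such an edge must carry many triangles, e.g.\ a $5$-$5$ edge must have at least three triangles sitting on it. The chain of deductions you intend to build, and hence the catalogue of local configurations, starts from the wrong statement.

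Two further points. First, the appeal to ``planarity slack'' via faces is misdirected: the target inequality $\sum_{v}(7d(v)-31)\leq 0$ contains no face terms, and Euler's formula by itself only yields $e\leq 3n-6$, far above $\frac{31}{14}n$; essentially all of the saving must come from $S_{2,4}$-freeness, and planarity enters only through the embedding structure of the local configurations. Second, your verification sketch has the check for low-degree vertices backwards: vertices of degree $3$ and $4$ begin with negative charge, so the danger is that they are shared by several positive vertices and receive too much. For instance, a degree-$4$ vertex adjacent to two degree-$5$ vertices (the $5$-$4$-$5$ configuration) already defeats any uniform per-neighbor rate that would let a degree-$5$ vertex shed its surplus $+4$, so this overlap must be treated structurally. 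Handling exactly this overlap is what the paper does by a different mechanism: it partitions $V(G)$ into vertex-disjoint pieces (closed neighborhoods of degree-$6$ vertices; subgraphs grown from $5$-$5$ edges; maximal expansions of $5$-$4$-$5$ and $5$-$3$-$5$ paths; $5$-$4^{-}$ stars; and the leftover vertices of degree at most $4$) and shows each piece $H$ satisfies $w(H)=\frac{1}{2}\sum_{v\in V(H)}d(v)\leq\frac{31}{14}v(H)$. A discharging proof along your lines may well exist, but as it stands the proposal rests on an incorrect key claim and supplies no rules to verify, so it does not establish the lemma.
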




\begin{proof}
We now describe the proof strategy. The graph $G$ will be decomposed into vertex disjoint subgraphs, and no subgraph contributes too much towards the total weight.
It is shown that there exists a  vertex partition  $V(G)=\bigcup\limits_{i=1}^{t}V(G_{i})$ with $V(G_{i})\cap V(G_{j})=\emptyset$ for any $i\neq j$, such that  $w(G_{i})\leq \frac{31}{14}v(G_{i})$. Moreover, for $1\leq i\leq t-1$, $G_{i}$ is a subgraph base on a $k$-$l$ edge, a $k$-$l$-$s$ path, or a $k$-$s^{-}$ star.

We first restrict the range of vertex degree in $G$ .

\begin{claim}
    $\Delta(G)\leq 6$.
\end{claim}
\begin{proof}
    Recall that $\delta(G)\geq 3$. If there is a vertex of degree at least $7$, then there exists an $S_{2,4}$ in $G$, a contradiction.     
\end{proof}

\begin{claim}
    If there exists a vertex of degree $6$, say $v$, in $G$, then $G[N[v]]$ is a connected component. 
\end{claim}
\begin{proof}
    Let $v$ be the vertex of degree $6$ and  
    $u\in N(v)$.
    When $n=7$, it is trivial.
    Assume $n\geq 8$. If $u$ has a neighbor in the $G\backslash N[v]$, then $G$ contains an $S_{2,4}$, a contradiction.  So there is no edge between $N[v]$ and $G\backslash N[v]$, which implies $G[N[v]]$ is a connected component.    
\end{proof}

Note that $w(G[N[v]])=e(G[N[v]])\leq 15\leq \frac{31}{14}\cdot 7$.

 Assume that $\Delta(G)\leq 5$.  We will show that for each vertex of degree $5$, there exists a subgraph $H$ containing it with $w(H)\leq \frac{31}{14}v(H)$.


\begin{figure}[ht]
  \centering  \includegraphics[width=0.85\textwidth]{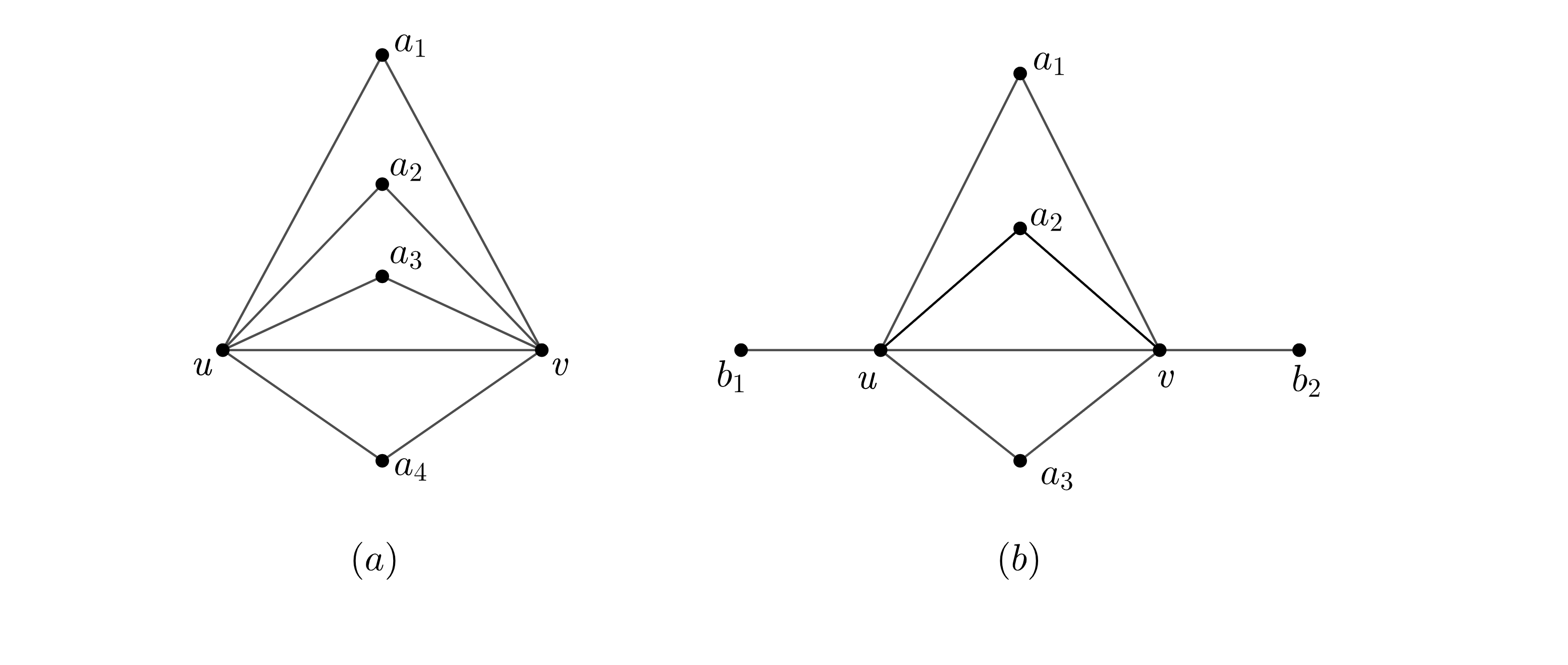}
  \caption{A $5$-$5$ edge with $4$ or $3$ triangles sitting on it.}
  \label{fig_55}
\end{figure}

\textbf{Case 1.} There exists a $5$-$5$ edge in $G$.

Let $uv$ be the $5$-$5$ edge in $G$. There exist at least $3$ triangles sitting on the  $uv$, otherwise an $S_{2,4}$ is found. Now, we distinguish the cases based on the number of triangles sitting on $uv$.

\textbf{Case 1.1.} There are $4$ triangles sitting on $uv$.

Let $a_{1}, a_{2}, a_{3}$ and $a_{4}$ be the vertices adjacent to both $u$ and $v$, as shown in Figure \ref{fig_55}$(a)$. Let $S=\{u, v, a_{1}, a_{2}, a_{3}, a_{4}\}$, $S_{1}=\{a_{1}, a_{2}, a_{3}, a_{4}\}$, $S_{2}=\{u, v\}$, $S'=V(G)\backslash S$, $H=G[S]$, $H'=G[S']$ and $H_{i}=G[S_{i}]$ for $i=1,2$. 
Hence we have $w(H)= 9+e(H_{1})+\frac{1}{2}e[H, H']$.

Note that all vertices in $S_{1}$ can form a path of length at most $3$ and each vertex in  $S_{1}$ can have at most one neighbor in $H'$, otherwise $G$ contains an $S_{2,4}$. This means that $e(H_{1})\leq 3$ and $e[H, H']\leq 4$.

If $e(H_{1})\leq 2$ or $e[H, H']\leq 2$, then $w(H)\leq 13\leq \frac{31}{14}\cdot 6$.
Assume that $e(H_{1})=3$ and $e[H, H']\geq 3$. There must exist a vertex, say $a_{1}$, of degree $5$. Let $a'_{1}\in S'$ and $a_{1}a'_{1}\in E(G)$ . 
We claim that the vertex $a'_{1}$ has the other neighbor in $H$. Otherwise an $S_{2,4}$ is contained by $d(a'_{1})\geq 3$. Moreover, it is obtained $d(a'_{1})=3$.

Let $S^{*}=S\cup a'_{1}$ and $H^{*}=G[S^{*}]$. It follows that 
\begin{align*}
    w(H^{*})&=w(H)+w(a'_{1})\\
    &=14+\frac{1}{2}e[H^{*}, G\backslash H^{*}]\\
    &\leq \frac{31}{2}=\frac{31}{14}\cdot 7.
\end{align*}

\begin{figure}[ht]
  \centering  \includegraphics[width=0.8\textwidth]{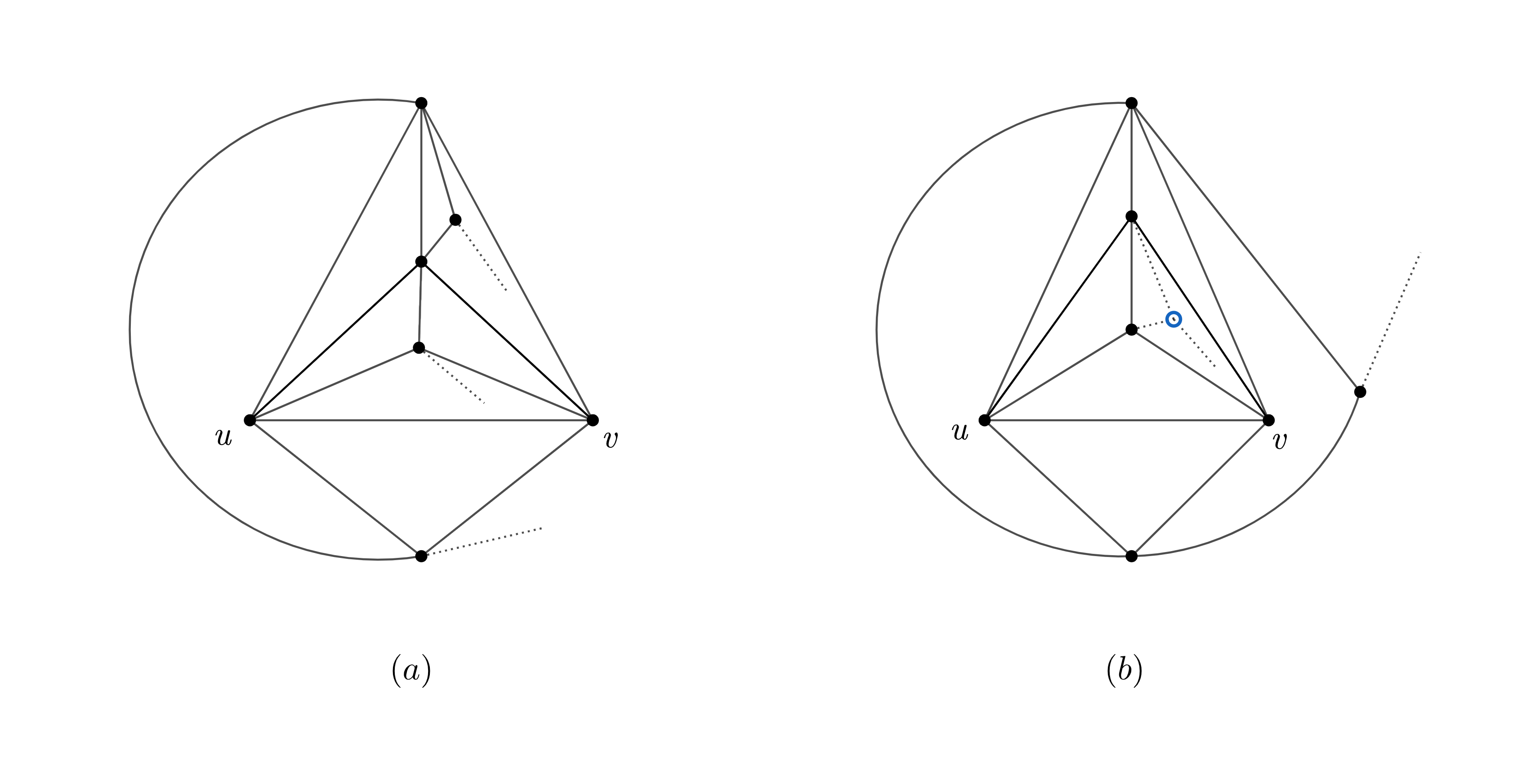}
  \caption{The subgraphs attaining the upper bound where $e(H_{1})=3$.}
  \label{fig_55_e1}
\end{figure}

In fact, there are two non-isomorphic subgraphs attaining the bound, as shown in Figure \ref{fig_55_e1}. 
Specially, for the subgraph $(b)$, we have $d(a_{2})=5$. Similarly, there must exist a vertex $a'_{2}$ such that $a'_{2}a_{2}, a'_{2}a_{3}$ are both edges in $G$. Let $S^{*}=S\cup \{a'_{1}, a'_{2}\}$ and $H^{*}=G[S^{*}]$. It follows that
$w(H^{*})=16+\frac{1}{2}e[H^{*}, G\backslash H^{*}]= 17= \lfloor \frac{31}{14}\cdot 8\rfloor$.

\textbf{Case 1.2.} There are $3$ triangles sitting on $uv$.\par
Let $a_{1}$, $a_{2}$ and $a_{3}$ be the vertices adjacent to both $u$ and $v$. Let $b_{1}$ be the vertex only adjacent to $u$ and $b_{2}$ be the vertex only adjacent to $v$, see Figure \ref{fig_55}$(b)$.  Let $S=\{u, v, a_{1}, a_{2}, a_{3}, b_{1}, b_{2}\}$, $S_{1}=\{a_{1}, a_{2}, a_{3}\}$, $S_{2}=\{b_{1}, b_{2}\}$ and $S_{3}=\{u, v\}$. That means $S = S_{1}\cup S_{2}\cup S_{3}$. And let $S'=V(G)\backslash S$, $H=G[S]$, $H'=G[S']$ and $H_{i}=G[S_{i}]$ for $i\in \{1, 2, 3\}$.

Thus we have 
$$ w(H)=9+e(H_{1})+e[H_{1}, H_{2}]+e(H_{2})+\frac{1}{2}(e[H_{1} , H']+e[H_{2} , H']).$$

Similarly, all vertices in $S_{1}$ can form a path of length at most $2$ and each vertex in  $S_{1}$ can have at most one neighbor in $H'$, which means that $e(H_{1})\leq 2$ and $e[H_{1} , H']\leq 3$. Besides, each vertex in  $S_{2}$ can have at most one neighbor in $S'\cup S_{2}$. Hence if $e(H_{2})=1$, then $e[H_{2} , H']=0$, which implies $e(H_{2})+\frac{1}{2}e[H_{2} , H']\leq 1$. If $a_{i}b_{j}\in E(G)$ for $i=1, 2, 3$ and $j=1, 2$, then $a_{i}$ can not have a neighbor in $H'$. This means that $e[H_{1}, H_{2}]+\frac{1}{2}e[H_{1} , H']\leq \frac{9}{2}$.

Moreover, we get $d(b_{1}), d(b_{2})\leq 4$, otherwise $G$ contains an $S_{2,4}$.

\begin{figure}[ht]
  \centering  \includegraphics[width=1\textwidth]{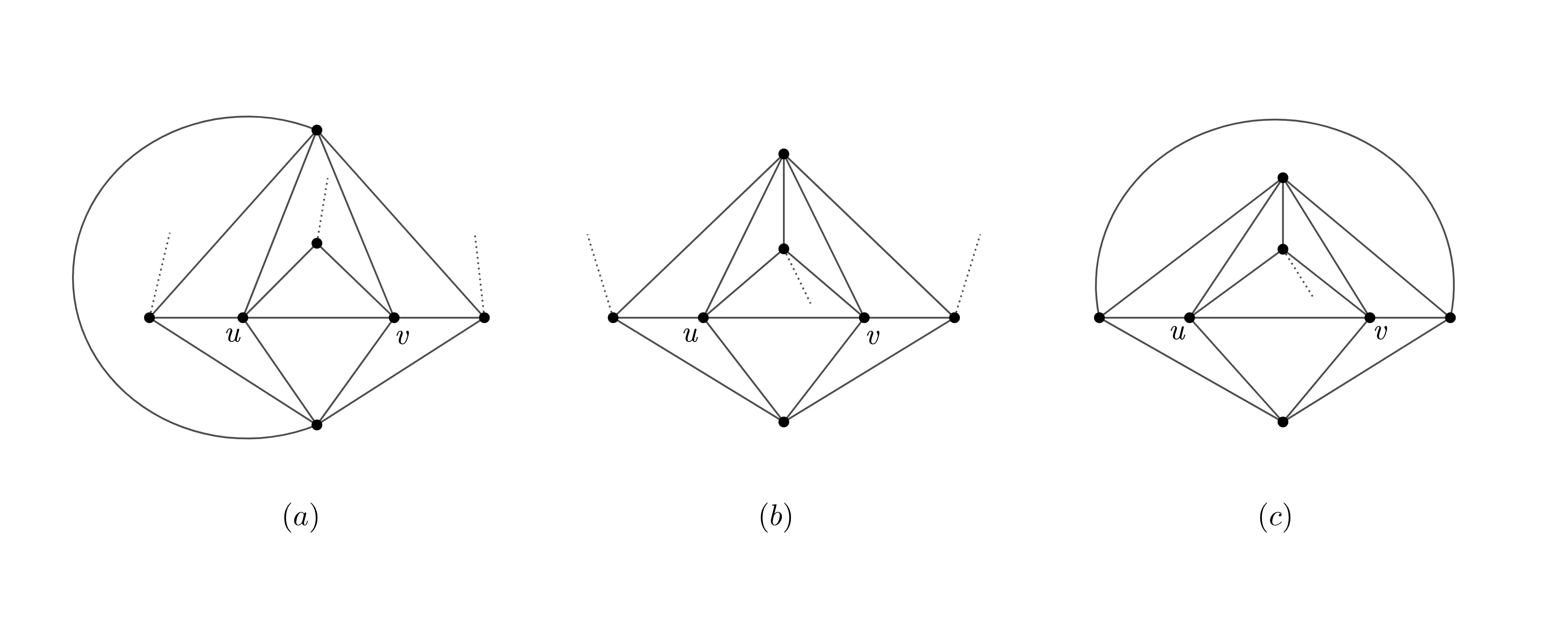}
  \caption{The subgraphs attaining the upper bound where $e(H_{1})=1$.}
  \label{fig_55_e2}
\end{figure}

When $d(b_{1})=d(b_{2})=4$, we have $b_{1}, b_{2}$ each have exactly two neighbors in $S_{1}$. Since $a_{2}, a_{3}$ are in different regions, it is concluded that $a_{1}$ must be the common neighbor of  $b_{1}, b_{2}$. 
If $e(H_{1})=2$, then $a_{1}a_{2}, a_{1}a_{3}$ are edges in $G$. We obtain that $d(a_{1})=6$, which contradicts  the assumption that $\Delta(G)\leq 5$.
So $e(H_{1})\leq 1$. Then we have 
$w(H)\leq 11+e[H_{1}, H_{2}]+\frac{1}{2}e[H_{1} , H']\leq 15\frac{1}{2}=\frac{31}{14}\cdot 7$. The possible subgraphs attaining the bound are shown in Figure~\ref{fig_55_e2}. Note that Figure~\ref{fig_55_e2}$(a)$ is isomorphic to the graph in Figure~\ref{fig_55_e1}$(a)$. Furthermore,  Figure~\ref{fig_55_e2}$(b)$  requires a more in-depth discussion.
If $b_{1}b_{2}$ is not an edge, an $S_{2,4}$ is contained in this subgraph.
Thus $b_{1}b_{2}$ is an edge, as shown in Figure~\ref{fig_55_e2}$(c)$.

Now assume that $d(b_{1})= 3, d(b_{2})=4$. 
If $|N(b_{1})\cap S_{1}|=2$, then there exists a vertex, say $a_{1}$, adjacent to $b_{1}, b_{2}$. Similarly, we have $e(H_{1})\leq 1$ and $e(H_{2})+\frac{1}{2}e[H_{2} , H']\leq \frac{1}{2}$. It follows that $w(H)\leq 15< \frac{31}{14}\cdot 7$.
Assume $|N(b_{1})\cap S_{1}|=1$. Then it is obtained that $e[H_{1}, H_{2}]+\frac{1}{2}e[H_{1} , H']\leq \frac{7}{2}$. Hence $w(H)\leq 15\frac{1}{2}=\frac{31}{14}\cdot 7$, where the equality holds when $e(H_{1})=2$. The subgraphs attaining the upper bound is shown in Figure~\ref{fig_55_e3}, which is also isomorphic to the one in Figure~\ref{fig_55_e1}$(a)$.

\begin{figure}[ht]
  \centering  \includegraphics[width=0.65\textwidth]{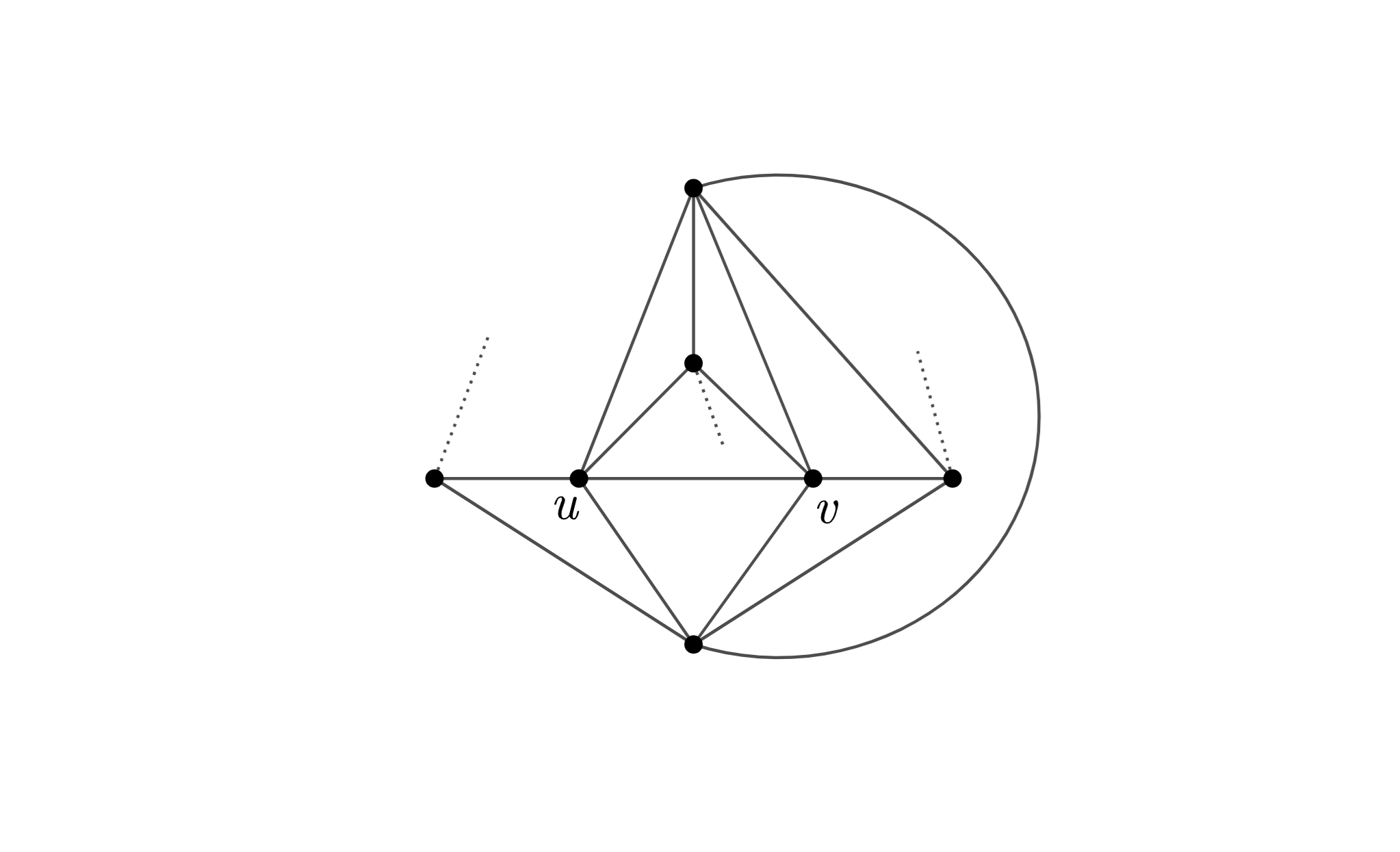}
  \caption{The subgraph attaining the upper bound where $e(H_{1})=2$.}
  \label{fig_55_e3}
\end{figure}

It remains to consider the case when  $d(b_{1})= 3, d(b_{2})= 3$. If $|N(b_{1})\cap S_{1}|=|N(b_{2})\cap S_{1}|=2$, there exists a vertex, say $a_{1}$, with degree at least $4$. This implies that $e(H_{1})\leq 1$ and  $e(H_{2})+\frac{1}{2}e[H_{2} , H']=0$. Thus $w(H)\leq 14\frac{1}{2}<\frac{31}{14}\cdot 7$.
Now we have $|N(b_{1})\cap S_{1}|\leq 2$ and $|N(b_{2})\cap S_{1}|\leq 1$. Then $e[H_{1}, H_{2}]+\frac{1}{2}e[H_{1} , H']\leq \frac{7}{2}$, with equality when $e(H_{2})=0$ and $e[H_{2} , H']\leq 1$. It follows that $w(H)\leq 15< \frac{31}{14}\cdot 7$.


\textbf{Case 2.}  There exists a $5$-$4$-$5$ path in $G$.




There are two possible planar embeddings.

\begin{figure}[ht]
  \centering  \includegraphics[width=0.9\textwidth]{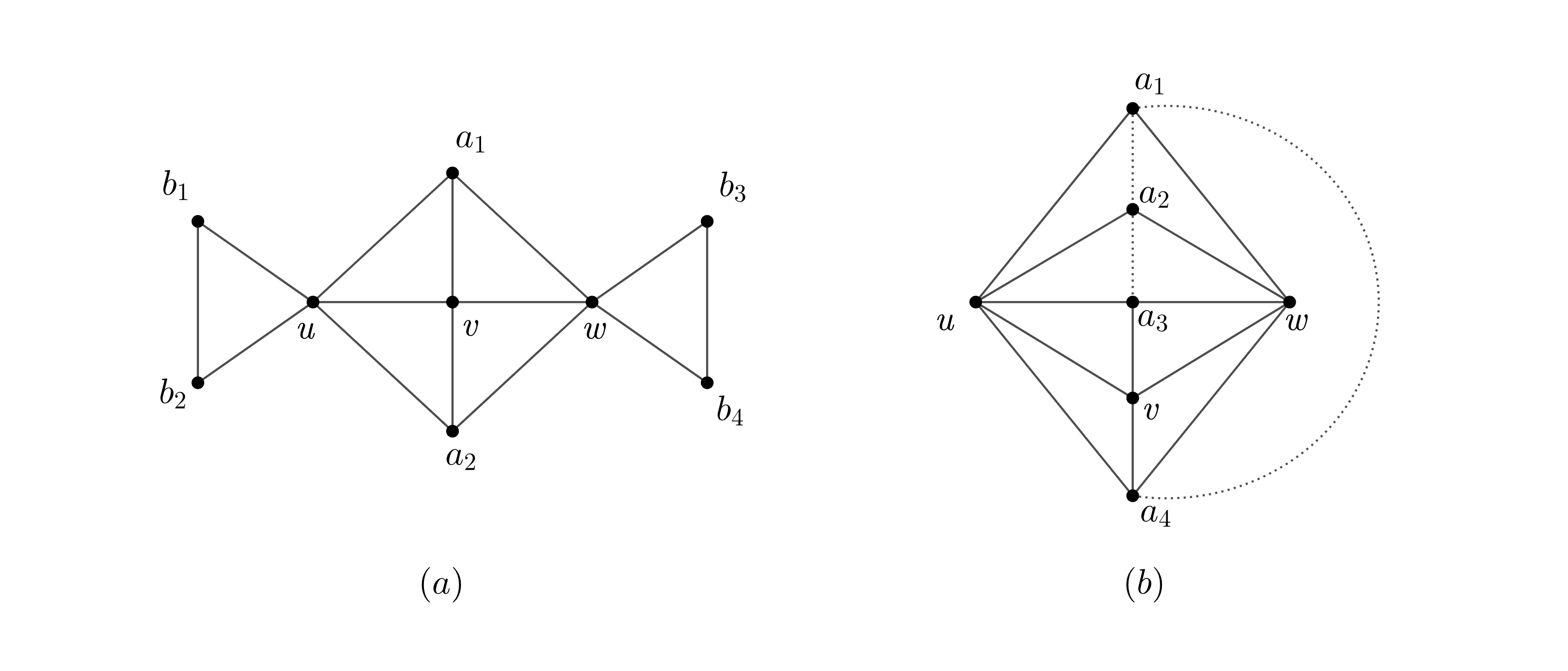}
  \caption{The subgraphs containing a $5$-$4$-$5$ path.}
  \label{fig_545}
\end{figure}

For the first planar embedding, as shown in Figure~\ref{fig_545}$(a)$, let $S=\{u, v, w, a_{1}, a_{2}, b_{1}, b_{2}, b_{3}, b_{4}\}$, $S_{1}=\{a_{1}, a_{2}\}$, $S_{2}=\{b_{1}, b_{2}, b_{3}, b_{4}\}$ and $H=G[S]$. Note that there is no edge between $S_{1}$
and $S_{2}$. Otherwise an $S_{2,4}$ is found. Since $\delta(G)\geq 3$, we get $b_{1}b_{2}, b_{3}b_{4}\in E(G)$. Note that $a_{1}$ may be adjacent to $a_{2}$ and each vertex in $S_{2}$  has exactly a neighbor outside of $H$. It follows that $w(H)\leq 15+\frac{4}{2}< \frac{31}{14}\cdot 9$.

For the second planar embedding, as shown in Figure~\ref{fig_545}$(b)$,  let $S=\{u, v, w, a_{1}, a_{2}, a_{3}, a_{4}\}$ and $H=G[S]$. It is easy to check that each vertex in $S$ have no neighbor outside of $H$. We have $H$ is a connected component. It follows $w(H)\leq 15< \frac{31}{14}\cdot 7$.

\textbf{Case 3.} There exists a $5$-$3$-$5$ path in $G$.

Without containing a $5$-$5$ edge or a $5$-$4$-$5$ path as a subgraph, $G$ has the only subgraph structure based on $5$-$3$-$5$ edge, shown in Figure \ref{fig_535}.

Let $S=\{u, v, w, v', a_{1}, a_{2}, a_{3}, b_{1}, b_{2}, b_{3}\}$, $S_{1}=\{a_{1}, a_{2}, a_{3}, b_{1}, b_{2}, b_{3}\}$ and $H=G[S]$.

\begin{figure}[ht]
  \centering  \includegraphics[width=0.7\textwidth]{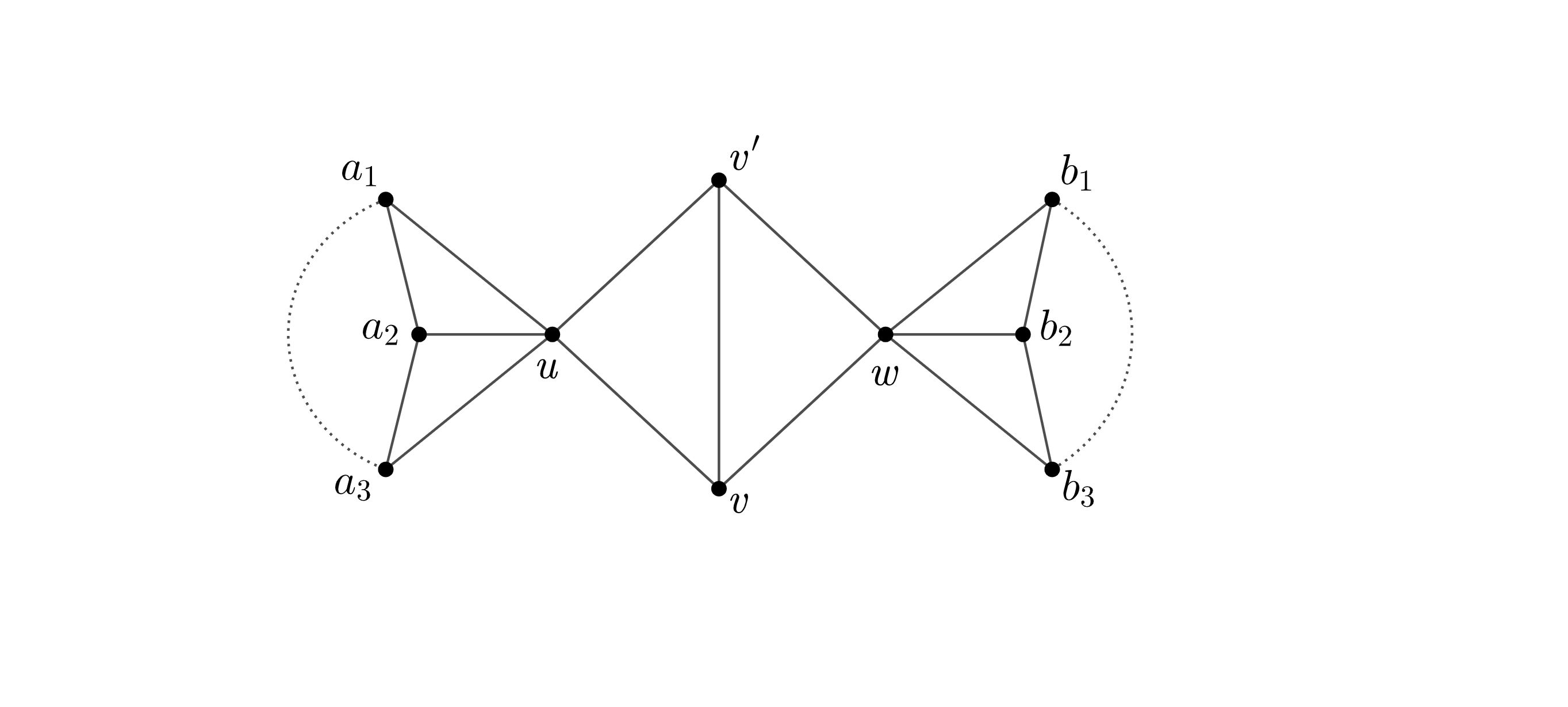}
  \caption{The subgraph containing a $5$-$3$-$5$ path.}
  \label{fig_535}
\end{figure}

Note that $a_{1}$ must be adjacent to some vertex in $\{a_{2}, a_{3}\}$, otherwise we find an $S_{2,4}$. Moreover, each vertex in $S_{1}$ has at most a neighbor outside of $H$. Hence, we have $w(H)\leq 17+\frac{6}{2}< \frac{31}{14}\cdot 10$.

\textbf{Case 3.} There exists a $5$-$4^{-}$ star in $G$.

Let $H$ be the induced graph by the $5$-$4^{-}$ star.
It is easy to know that $w(H)\leq \frac{1}{2}(5\cdot 1+4\cdot 5)\leq \frac{31}{14}\cdot 6$.

Now we study a class of special subgraphs that may be contained in $G$. Given a $5$-$4$-$5$ path shown in Figure~\ref{fig_545}$(a)$, if there exists a vertex, say $u'$, with $d(u')=5$
such that $u\neq u'$ and $u'b_{1}\in E(G)$, we have $u'b_{2}\in E(G)$. Otherwise an $S_{2,4}$ is found.
Then the subgraph containing vertices $u, v, w, u'$ must be the one shown in Figure~\ref{fig_54535}$(a)$.
It is worth noting that this subgraph can be obtained by merging a $5$-$4$-$5$ path and a $5$-$3$-$5$ path. 
We can continue this process until the vertices in the subgraph are no longer adjacent to other vertices of degree $5$. This subgraph is called the maximal expansion based on $5$-$4$-$5$ path. 
The graph in Figure~\ref{fig_54535}$(b)$ shows the structure obtained after two expansion operations. 
Similarly, the maximal expansion based on $5$-$3$-$5$ path can be defined in the same way.
It can be easily checked that the weight of the maximal expansion satisfies the bound.

\begin{figure}[ht]
  \centering  \includegraphics[width=0.95\textwidth]{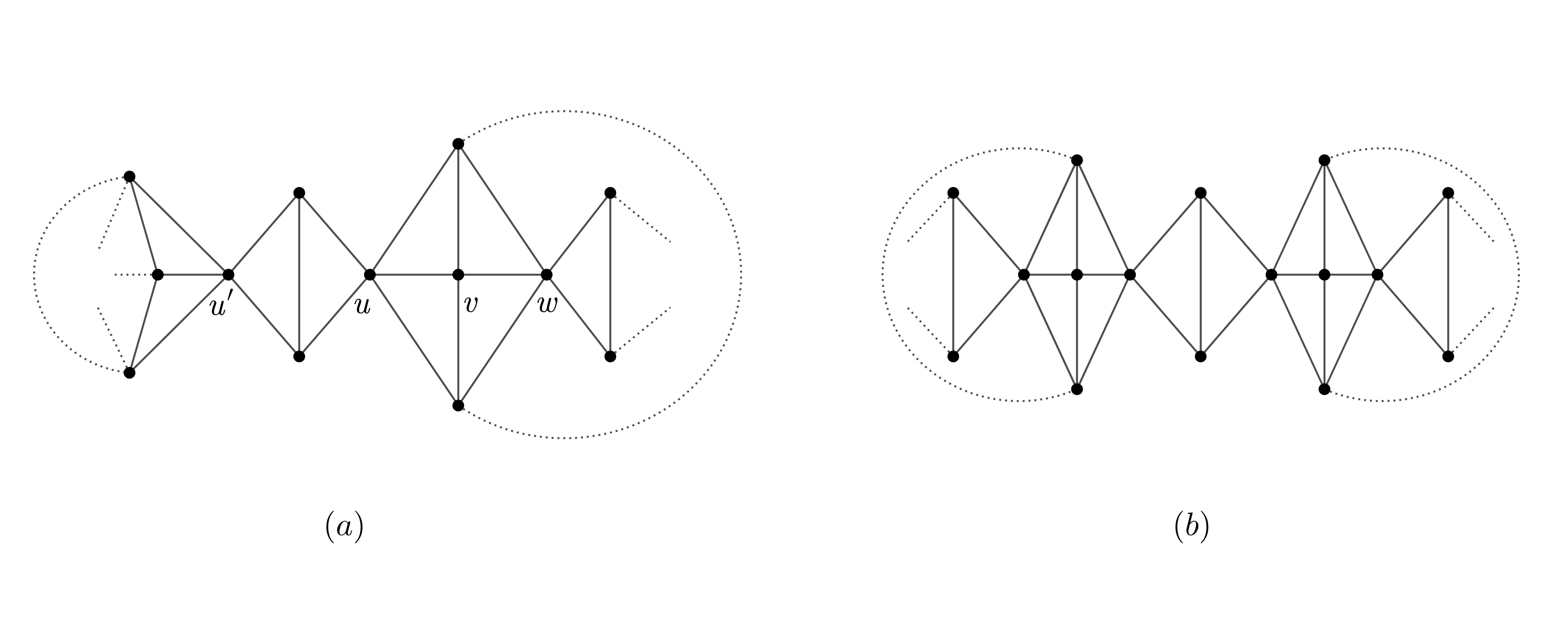}
  \caption{The expansion based on a $5$-$4$-$5$ path.}
  \label{fig_54535}
\end{figure}

Next we decompose the graph $G$ into vertex disjoint subgraphs through the following steps. 

(I) For each vertex $u$ of degree $6$, we have $G[N[u]]$ is a connected component. Let $G_{i}, i=1,\cdots, p$ be such kind of components. 

(II) For each vertex of degree $5$, the vertex is inspected sequentially according to the following rules, and the first subgraph structure identified is denoted by $G_{i}$. 
Assume $u$ be the vertex.

(a) There exists a vertex $v$ of degree $5$ such that $uv\in E(G)$.  Then the subgraph based on this $5$-$5$ edge we discuss above is considered as $G_{i}$. 

(b) There exists a vertex $v$ of degree $5$ such that  $N(u)\cap N(v)\neq \emptyset$.  (i) If  $u, v$ are contained in  a $5$-$4$-$5$ path, then the maximal expansion based on this $5$-$4$-$5$ path is the $G_{i}$.
(ii) If  $u, v$ are contained in  a $5$-$3$-$5$ path, then the maximal expansion  based on this $5$-$3$-$5$ path is the $G_{i}$.

Specially, the vertex $v$ is determined too.

(c) The  vertex  $u$ is contained in a $5$-$4^{-}$ star. Then  let $G_{i}$ be the $5$-$4^{-}$ star.

Repeat the process until there is no vertex of degree $5$.
Let $G_{i}, i=p+1,\cdots, p+q$ be such subgraphs. 

(III) All vertices left are degree of at most $4$. Let $G_{p+q+1}$ be the graph induced by all these vertices.

We show that different subgraphs obtained here are vertex disjoint. If $G_{i}$ contains a vertex of degree $6$, then $G_{i}$ is a connected component on $7$ vertices. 
If $G_{i}$ is a subgraph based on a $5$-$5$ edge, $5$-$4$-$5$ path or $5$-$3$-$5$ path, the vertex set of $G_{i}$ consists of some vertices of degree $5$ and their neighbors. 
Moreover, any vertex in $G_{i}$ which has a neighbor outside can not be adjacent to some vertex of degree $5$ in $G\backslash G_{i}$. Otherwise, an $S_{2,4}$ is found. If $G_{i}$ is a $5$-$4^{-}$ star, any vertex in $G_{i}$ can not be adjacent to some vertex of degree $5$ in $G\backslash G_{i}$, otherwise it contradicts with the decomposition rules above.

Hence we construct a vertex partition 
\begin{align*}
    V(G)&=\bigcup\limits_{i=1}^{p}V(G_{i})\cup \bigcup\limits_{i=p+1}^{p+q}V(G_{i})\cup V(G_{p+q+1})\\    &=\bigcup\limits_{i=1}^{t}V(G_{i})
\end{align*}
where $t=p+q+1$.

For $1\leq i\leq p+q$, we have  $w(G_{i})\leq \frac{31}{14}v(G_{i})$ by the discussion above.
For $i=t$, it is obtained  $w(G_{t})=\frac{1}{2}\sum\limits_{v\in V(G_{t})}d(v)\leq 2v(G_{t})\leq \frac{31}{14}v(G_{t})$.

Therefore,    $e(G)=w(G)=\sum\limits_{i=1}^{t}w(G_{i})\leq \sum\limits_{i=1}^{t}\frac{31}{14}v(G_{i})=\frac{31}{14}v(G)$. The lemma is proved.



\end{proof}

\section{Planar Tur\'an number of $S_{2,4}$}

Here we give the proof of the Theorem~\ref{thm} and construct the extremal planar graphs.

\begin{proof}
    Assume that $\mathcal{G}$ is the set of all $S_{2,4}$-free planar graphs.
    Then for each graph $G\in \mathcal{G}$, we define the operation
    \begin{itemize}
        \item Delete the vertex of degree at most $2$.
    \end{itemize}
    Repeat the operation until it can no longer go on. The induced graph is denoted by $G'$. We know that $\delta(G')\geq 3$ or $G'$ is an empty graph.

    If $\delta(G')\geq 3$, it is obtained  that  $e(G')\leq \frac{31}{14}v(G')$ by Lemma~\ref{lem}.

    Hence,
    \begin{align*}
        e(G)&\leq e(G')+2(v(G)-v(G'))\\
        &\leq \frac{31}{14}v(G')+2(v(G)-v(G'))\\
        &\leq \frac{31}{14}v(G).
    \end{align*}
    If $G'$ is an empty graph, then $e(G)\leq e(G')+2(v(G)-v(G'))\leq 2v(G)\leq \frac{31}{14}v(G)$.

    This completes the proof.
\end{proof}

Now we complete it by demonstrating that this bound is tight. 
It is known that the number of edges in extremal graphs must attain the equality in Theorem~\ref{thm}. This means $V(G)$ has a vertex partition $\bigcup\limits_{i=1}^{t}V(G_{i})$ such that each $G_{i}$ is some subgraph based on a $5$-$5$ edge. By Lemma~\ref{lem}, there are exactly two non-isomorphic $7$-vertex subgraphs attaining the bound $\frac{31}{14}\cdot 7$ and one $8$-vertex subgraph attaining the bound $\lfloor \frac{31}{14}\cdot 8\rfloor$.

Now we liken the construction of extremal planar graph to building with "blocks".
Each subgraph in Figure~\ref{fig_55_e2} is treated as a building block, and by piecing together different  blocks, we can obtain the extremal graph.
Let $H^{\#}, H^{*}$ be the corresponding subgraphs in Figure~\ref{fig_55_e2} $(a, c)$.
Note that $e(H^{\#})=14$ and $e(H^{*})=15$.

\begin{figure}[ht]
  \centering  \includegraphics[width=0.6\textwidth]{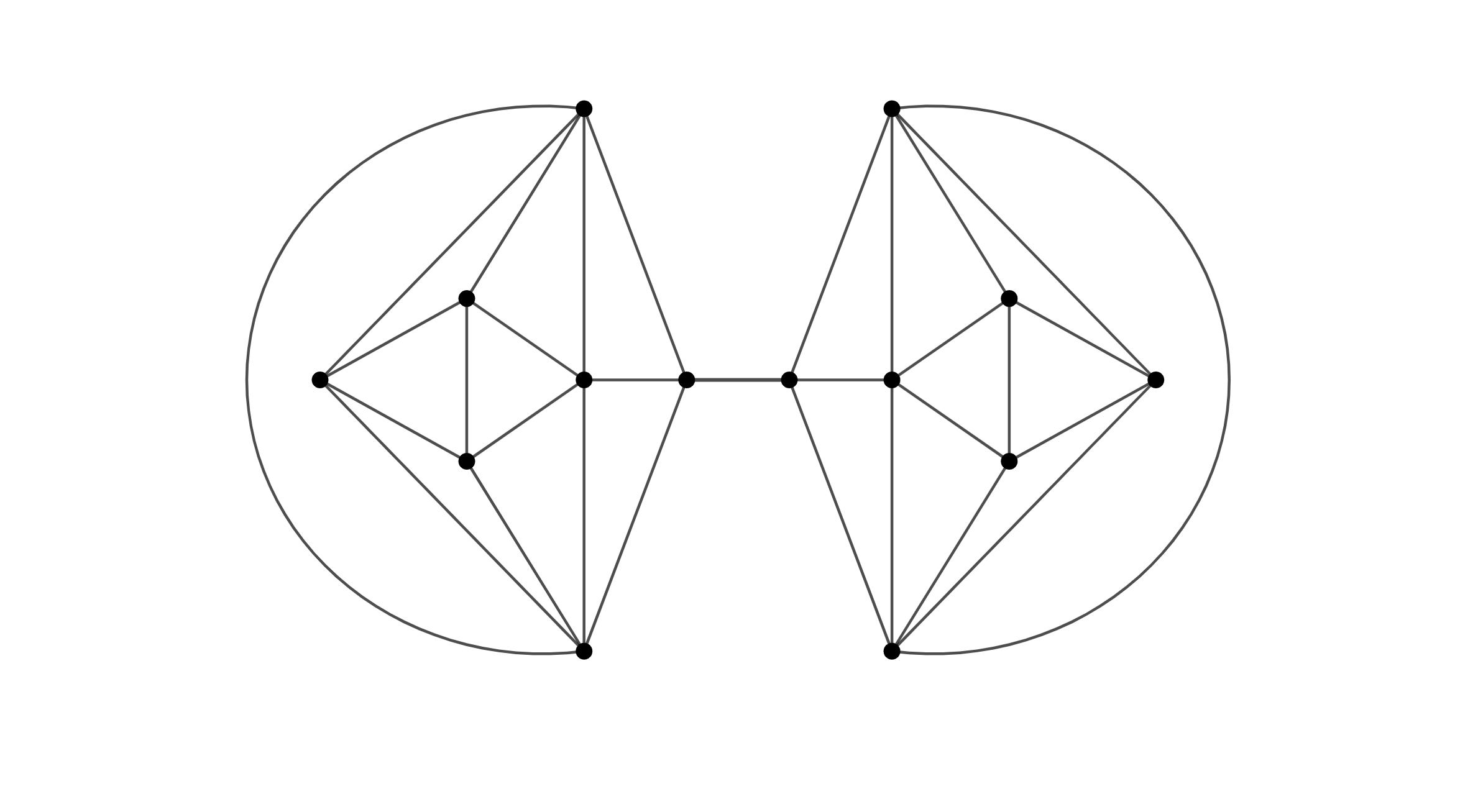}
  \caption{The extremal planar graph with $14$ vertices and $31$ edges.}
  \label{fig_ex}
\end{figure}
For example,  an extremal planar graph with $14$ vertices and $31$ edges is constructed by two $H^{*}$'s, as shown in Figure~\ref{fig_ex}.

We redraw these two subgraphs $H^{\#}, H^{*}$ by contracting $H^{\#}$ or $H^{*}$ into a single vertex and keeping the edges incident with it,  as shown in Figure~\ref{fig_ex2}$(a)$. Then the graph in Figure~\ref{fig_ex} is illustrated as  Figure~\ref{fig_ex2}$(b)$. 
\begin{figure}[ht]
  \centering  \includegraphics[width=0.95\textwidth]{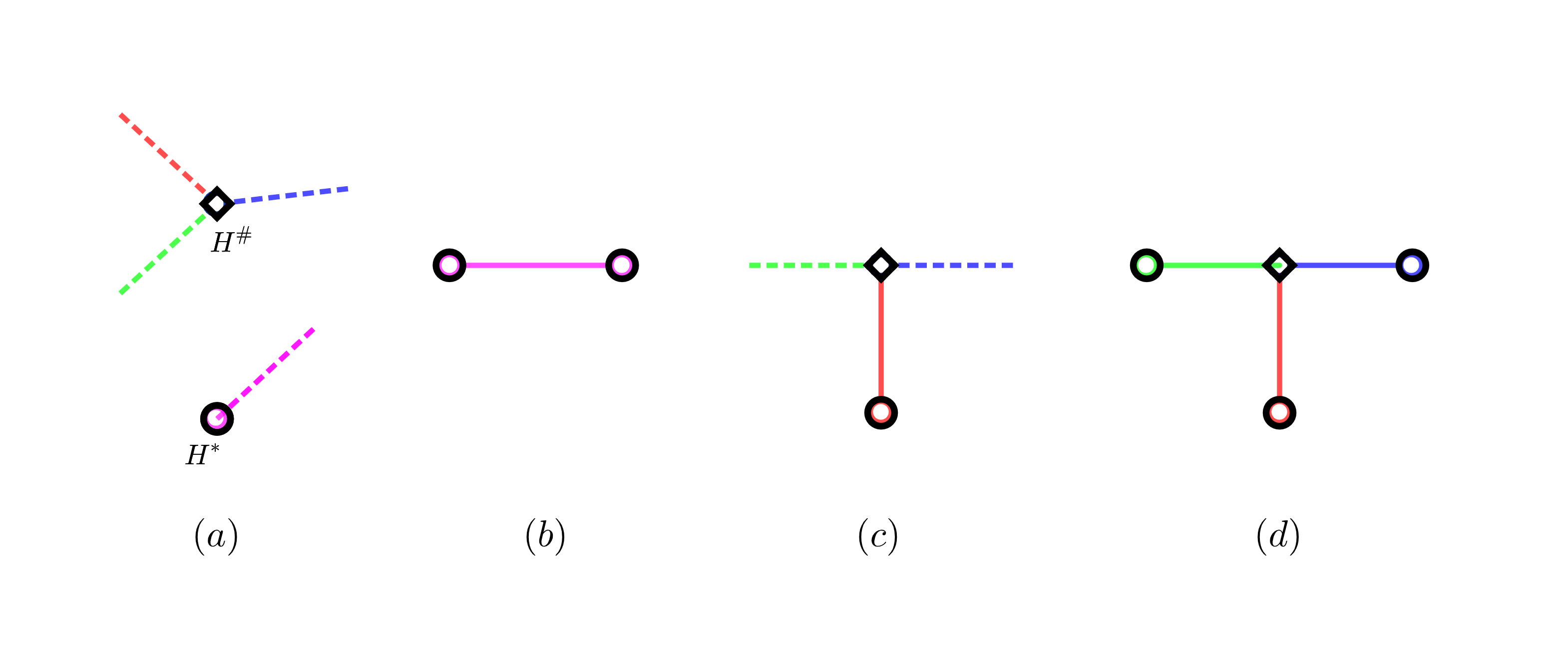}
  \caption{$(a), (c)$ the building blocks; $(b)$, $(d)$ some extremal graphs.}
  \label{fig_ex2}
\end{figure}

Given the subgraphs $(b), (c)$ in Figure~\ref{fig_ex2}, we obtain a new graph $(d)$, say $G'$, by cutting the colored edge in $(b)$ and connecting $(c)$ to the vertices incident with this colored edge. It is easy to check that $v(G')=28$ and $e(G')=62$.
By repeating the aforementioned process, we obtain the corresponding extremal graphs, as shown in Figure~\ref{fig_ex3}. 
Therefore, there exists an $n$-vertex planar graph $G$ with $e(G)=\frac{31}{14}n$ for $n\equiv 0\ (\text{mod } 14)$. 
It is worth mentioning that the structure of extremal graphs is analogous to that of a ``Tree", where the ``leaves" are  $H^{*}$'s. Obviously, there are other non-isomorphic ``trees”.
In conclusion, all extremal planar graphs are constructed by these two building blocks.

\begin{figure}[ht]
  \centering  \includegraphics[width=0.9\textwidth]{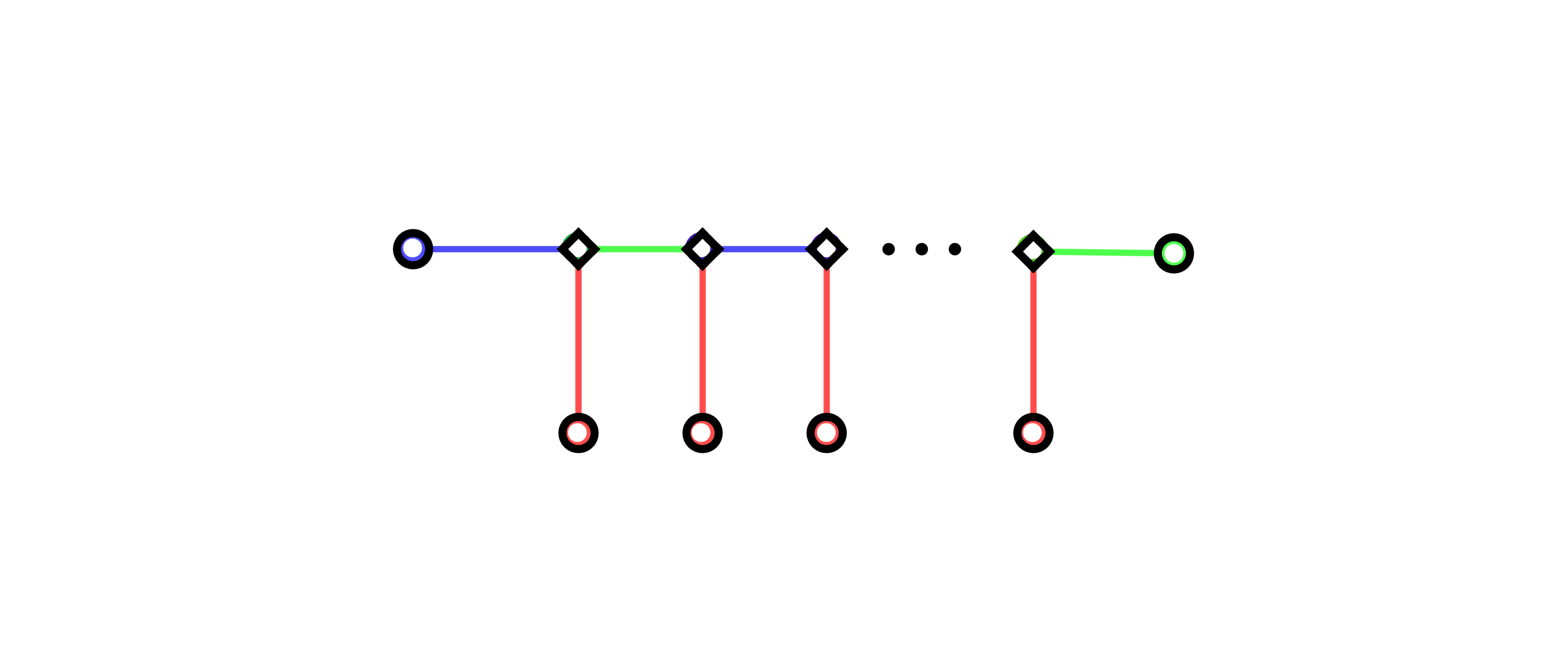}
  \caption{The extremal graphs on $n$ vertices for $n\equiv 0\ (\text{mod } 14)$.}
  \label{fig_ex3}
\end{figure}

Moreover, a new building block, as shown in Figure~\ref{fig_55_e1}$(b)$, has $8$ vertices and $\lfloor \frac{31}{14}\cdot 8\rfloor$ edges. For $n\geq 14$, it can be inserted in the ``Tree". Then we obtain an $n$-vertex planar graph $G$ with $e(G)= \lfloor \frac{31}{14}n\rfloor$ for $n\equiv 8\ (\text{mod } 14)$.
And there are some subgraphs on $7$ vertices attaining the bound $\lfloor \frac{31}{14}\cdot 7\rfloor$ too. The relevant discussion is essentially the same, so we will not elaborate further here. 


\section{Acknowledgments}
Xin Xu was supported  by Research Funds of North China University of Technology (No. 2023YZZKY19).

\end{document}